\newtheorem{theorem}{Theorem}
\newtheorem{corollary}[theorem]{Corollary}
\newenvironment{proof}{\par\noindent{\bf Proof.}}{$\qed$\par\bigskip}
\newcommand{\qed}{\enspace\vrule  height6pt  width4pt  depth2pt}
\begin{document}

\title{Gr\"{o}bner-Shirshov bases for plactic algebras}

\author{{\L}ukasz Kubat and Jan Okni\'nski\thanks{Work supported in part by
MNiSW research grant N201 420539 (Poland).}}
\date{}
\maketitle

\begin{abstract}
A finite Gr\"obner-Shirshov basis is constructed for the plactic
algebra of rank $3$ over a field $K$. It is also shown that
plactic algebras of rank exceeding $3$ do not have finite
Gr\"obner-Shirshov bases associated to the natural
degree-lexicographic ordering on the corresponding free algebra.
The latter is in contrast with the case of a strongly related
class of algebras, called Chinese algebras, considered in
\cite{chen}.
\end{abstract}

Let $P_{n}$ denote the plactic algebra of rank $n\geq 3$ over a
filed $K$. So $P_{n}=K[M_{n}]$ is the monoid algebra over $K$ of
the plactic monoid $M_{n}$ of rank $n$, which is defined by the
following finite presentation:
$$M_{n}=\langle x_{1},\ldots ,x_{n}\mid R\rangle ,$$
where $R$ is the set of all relations (called Knuth relations) of
the form
$$x_{j}x_{i}x_{k}= x_{j}x_{k}x_{i} , \,\,\, x_{i}x_{k}x_{j}= x_{k}x_{i}x_{j}
\mbox{ for } i<j<k$$ and $$x_{j}x_{i}x_{i}= x_{i}x_{j}x_{i},
\,\,\,  x_{j}x_{i}x_{j}= x_{j}x_{j}x_{i}  \mbox{ for } i<j.$$ The
origin of the plactic monoid stems from Schensted's algorithm that
was developed in order to determine the maximal length of a
non-increasing subsequence and the maximal length of a decreasing
subsequence in any finite sequence with elements in the set $\{
1,2,\ldots ,n\}$, see \cite{las-schut}.  Combinatorics of
$M=M_{n}$ was thoroughly studied in \cite{las-schut}, see also
\cite{las-lec}. In particular, it is known that elements of $M$
admit a canonical normal form, expressed in terms of the
associated Young tableaux. Later, deep applications of the plactic
monoid to problems in representation theory, algebraic
combinatorics, theory of quantum groups and relations to some
other important areas of mathematics were discovered. We refer to
\cite{fulton},\cite{las-lec} and \cite{leclerc} for a presentation
of these topics. Some ring theoretical properties of the plactic
algebra were described in \cite{cedo-okn}. The purpose of this
note is to study Gr\"obner-Shirshov bases of $K[M]$ related to the
natural order on the associated free algebra.

Let $X$ be the free monoid of rank $n$, with  free generators also
denoted by $x_{1},\ldots ,x_{n}$. Then $X$ is equipped with the
degree-lexicographic order extending the following order on the
generating set of $X$: $x_{1}<x_{2}<\cdots <x_{n}$. For
simplicity, the generators $x_{i}$ will be also denoted by $i$, if
unambiguous. This natural order is inherent in the nature of the
plactic monoid, originally developed for the combinatorial problem
mentioned above.

Our first result reads as follows.

\begin{theorem} \label{n3}
If $n=3$ then $K[M]$ has a finite Gr\"{o}bner-Shirshov basis. Namely,
the following elements, viewed as elements of the free algebra
$K[X]$, form such a basis:
\begin{enumerate}
\item[i)] 332 - 323
\item[ii)] 322 - 232
\item[iii)] 331 - 313
\item[iv)]  311 - 131
\item[v)] 221 - 212
\item[vi)]  211 - 121
\item[vii)] 231 - 213
\item[viii)]  312 - 132
\item[ix)]  3212 - 2321
\item[x)] 32131 - 31321
\item[xi)] 32321 - 32132.
\end{enumerate}
\end{theorem}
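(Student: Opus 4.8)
The plan is to apply the Composition--Diamond Lemma for associative algebras over a field (Shirshov, Bokut; in the form of Bergman's Diamond Lemma), with respect to the degree-lexicographic order on $X$ fixed above. Write $G=\{f_{1},\ldots ,f_{11}\}$ for the eleven elements i)--xi), and for $f\in G$ let $\bar f$ be its leading word; under the chosen order the set of leading words is
$$W=\{332,\ 322,\ 331,\ 311,\ 221,\ 211,\ 231,\ 312,\ 3212,\ 32131,\ 32321\}.$$
The first step is to check that the two-sided ideal $(G)\subseteq K[X]$ coincides with the ideal $I$ defining $K[M]$, i.e.\ the ideal generated by the Knuth relations. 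One inclusion is immediate: for $n=3$ the only triple with $i<j<k$ is $(1,2,3)$, and relations i)--viii) are exactly the eight Knuth relations of $M_{3}$ rewritten in $K[X]$, so $I\subseteq (G)$. For the reverse inclusion it suffices to exhibit $f_{9},f_{10},f_{11}$ as consequences of the Knuth relations: each of $3212$, $32131$, $32321$ rewrites to $2321$, $31321$, $32132$ through a short chain of Knuth-relation applications (for instance $3212\to 3221\to 2321$ using v) and ii)). Hence $(G)=I$ and $K[X]/(G)\cong K[M]$.

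The heart of the argument is to verify that $G$ is closed under composition, i.e.\ that every composition of two of its elements is trivial modulo $G$. One first notes that no leading word in $W$ is a factor of another one, so there are no inclusion ambiguities, and only overlap ambiguities remain. For each ordered pair $(f,g)$ of elements of $G$ and each word $w$ that is a proper overlap of their leading words (so $w=\bar f\,b=a\,\bar g$ with $a,b$ nonempty and $|w|<|\bar f|+|\bar g|$), one forms the composition $f\,b-a\,g$ and reduces it by the rewriting rules $\bar h\mapsto\bar h-h$, $h\in G$; the claim is that it always reduces to $0$. Since leading words have length at most $5$, the overlap words have bounded length and there are only finitely many such compositions; I would organize the verification by the lengths and initial letters of the leading words. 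Many compositions collapse at once — e.g.\ the overlap $3322$ of $332$ and $322$ rewrites to $3232$ along both branches — whereas the overlaps that gave rise to the extra generators ix)--xi), together with the further overlaps these generators create with the degree-$3$ leading words and with one another, require several reduction steps before closing up.

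Once all compositions are shown to be trivial, the Composition--Diamond Lemma gives that the set of $G$-irreducible words (those containing no element of $W$ as a factor) is a $K$-linear basis of $K[X]/(G)=K[M]$, whence $G$ is a Gr\"obner--Shirshov basis of $K[M]$ for the chosen order. The main obstacle is combinatorial rather than conceptual: producing the complete finite list of overlap ambiguities with no omission and carrying out the longer reductions correctly — in particular those coming from overlaps of $3212$, $32131$ and $32321$ with the length-$3$ leading words and with each other — since a single missed or mishandled composition would invalidate the conclusion. As an independent check I would, in parallel, count the $G$-irreducible words of each degree $d$ and compare with the number of semistandard Young tableaux on $\{1,2,3\}$ with $d$ boxes, which equals $\dim_{K}K[M]_{d}$; agreement in low degrees is strong evidence that the composition list is complete, and in fact establishing this equality for all $d$ would give an alternative route to the theorem via the converse of the Diamond Lemma, using only $(G)\subseteq I$.
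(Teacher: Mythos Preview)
Your plan is correct and follows essentially the same route as the paper: apply Bergman's Diamond Lemma (equivalently the Composition--Diamond Lemma) by listing all overlap ambiguities among the eleven leading words and verifying that each resolves; the paper carries this out explicitly, enumerating and resolving twenty-seven overlaps. Your additional remarks --- the explicit check that $(G)=I$ via deriving ix)--xi) from the Knuth relations, the observation that there are no inclusion ambiguities, and the proposed cross-check against the tableau count --- are sound supplements that the paper leaves implicit or omits, but the core argument is the same.
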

\begin{proof}
Recall that by a reduction we mean a replacement of a subword of a
given word $w$ of the free monoid $X$ that is the leading term of
an element $f\in K[X]$ listed in i)-xi) by a subword that is equal
to the remaining monomial $w'$ of $f$. For example,
$1321223\rightarrow 1232123$ is a reduction using the polynomial
$f = 3212 - 2321$ listed in ix) above. First, we list all
ambiguities between two types of reductions that can occur in the
process of bringing $w$ to a form that cannot reduced anymore
(notice that such a form exists because the defining relations of
$M$ are homogeneous):
\begin{enumerate}
\item $(332)2 = 3(322)$

\item $(332)21 = 33(221)$

\item $(332)11=33(211)$

\item $(332)31 = 33(231)$

\item $(332)12 = 3(3212)$

\item $(332)131 = 3(32131)$

\item $(332)321 = 3(32321)$

\item $(322)1 = 3(221)$

\item $(322)21 = 32(221)$

\item $(322)11 = 32(211)$

\item $(322)31 = 32(231)$

\item $(331)1=3(311)$

\item $(331)2 = 3(312)$

\item $2(311)=(231)1 $

\item $321(311)=(32131)1$

\item $(221)1 = 2(211)$

\item $31(221) = (312)21$

\item $321(221) = (3212)21$

\item $31(211) = (312)11$

\item $321(211) = (3212)11$

\item $323(211)=(32321)1 $

\item $(231)2 = 2(312)$

\item $31(231)=(312)31$

\item $321(231) = (3212)31$

\item $321(312) = (32131)2$

\item $32(3212) = (32321)2$

\item $32(32131) = (32321)31$
\end{enumerate}
Next, we show that all ambiguities can be resolved, using the
reductions provided by the elements i)-xi) listed above (see the
diamond lemma in \cite{bergman}, see also \cite{cohn}).
\begin{enumerate}
\item
\noindent $(332)2 \rightarrow 3232$ (using i))

\noindent $3(322)\rightarrow 3232$ (using ii))
\item
\noindent $(332)21 \rightarrow 32321$ (using i))

\noindent $33(221)\rightarrow 33213 \rightarrow 32313 $ (using v),
i))

\item
\noindent $(332)11\rightarrow  32311 \rightarrow  32131
\rightarrow  31321 $ (using i), vii), x))

\noindent $33(211)\rightarrow  33121 \rightarrow  31321$   (using
vi), iii))

\item
\noindent $(332)31 \rightarrow 32331\rightarrow 32313 $ (using i),
iii))

\noindent $ 33(231)\rightarrow 33213 \rightarrow 32313 $ (using
vii), i))
\item
\noindent $(332)12 \rightarrow 32312 \rightarrow 32132 $ (using
i), vii))

\noindent $3(3212)\rightarrow 32321 \rightarrow 32132$ (using ix),
xi))
\item
\noindent $(332)131 \rightarrow 323131 \rightarrow 321331
\rightarrow 321313 \rightarrow 313213 $ (using i), vii), iv), x))

\noindent $ 3(32131)\rightarrow 331321 \rightarrow 313321
\rightarrow 313231 \rightarrow 313213$ (using xi), iii), i), vii))
\item
\noindent $(332)321 \rightarrow 323321 \rightarrow 323231
\rightarrow 323213 \rightarrow 321323 $ (using i), i), vii), xi))

\noindent $ 3(32321)\rightarrow332132 \rightarrow 323132
\rightarrow 321332 \rightarrow 321323 $ (using xi), i), vii), i))
\item
\noindent $(322)1 \rightarrow 2321$ (using ii))

\noindent $3(221)\rightarrow 3212 \rightarrow 2321$ (using v),
ix))

\item
\noindent $(322)21 \rightarrow 23221 \rightarrow 22321 $ (using
ii), ii))

\noindent $ 32(221) \rightarrow 32212 \rightarrow 23212
\rightarrow 22321$ (using v), ii), ix))

\item
\noindent $(322)11\rightarrow 23211$ (using i))

\noindent $ 32(211)\rightarrow 32121 \rightarrow 23211$ (using
vi), ix))
\item
\noindent $(322)31\rightarrow 23231 \rightarrow 23213$ (using ii),
vii))

\noindent $ 32(231)\rightarrow 32213 \rightarrow 23213$ (using
vii), ii))
\item
\noindent $(331)1\rightarrow 3131$ (using iii))

\noindent $3(311)\rightarrow 3131$ (using iv))
\item
\noindent $(331)2 \rightarrow 3132$ (using iii))

\noindent $3(312)\rightarrow 3132$ (using viii))
\item
\noindent $2(311)\rightarrow 2131$ (using iv))

\noindent $(231)1\rightarrow 2131$ (using vii))
\item
\noindent $321(311) \rightarrow 321131 \rightarrow
312131\rightarrow 132131 \rightarrow 131321$ (using iv), vi),
viii), x))

\noindent $(32131)1 \rightarrow 313211 \rightarrow
313121\rightarrow 311321 \rightarrow 131321$ (using x), vi),
viii), iv))
\item
\noindent $(221)1 \rightarrow 2121$ (using v))

\noindent $2(211)\rightarrow 2121$ (using vi))
\item
\noindent $31(221) \rightarrow 31212 \rightarrow 13212$ (using v),
viii))

\noindent $ (312)21\rightarrow 13221 \rightarrow 13212 $ (using
viii), v))
\item
\noindent $321(221) \rightarrow 321212 \rightarrow 232112
\rightarrow 231212 \rightarrow 213212 \rightarrow 212321$ (using
v), ix), vi), vii), ix))

\noindent $(3212)21\rightarrow 232121 \rightarrow 223211
\rightarrow 223121 \rightarrow 221321 \rightarrow 212321 $ (using
ix),ix), vi), vii), v))
\item
\noindent $31(211) \rightarrow 31121 \rightarrow 13121$ (using
vi), iv))

\noindent $(312)11\rightarrow 13211 \rightarrow 13121 $ (using
viii), vi))
\item
\noindent $321(211)\rightarrow 321121 \rightarrow 312121
\rightarrow 132121 \rightarrow 123211 \rightarrow 123121
\rightarrow 121321 $ (using vi), vi), viii), ix), vi), viii))

\noindent $ (3212)11\rightarrow 232111 \rightarrow 231211
\rightarrow 213211 \rightarrow 213121 \rightarrow 211321
\rightarrow 121321$ (using ix), vi), vii), vi), viii), vi))
\item
\noindent $323(211)\rightarrow 323121 \rightarrow 321321$ (using
vi), vii))

\noindent $(32321)1 \rightarrow 321321$ (using xi))
\item
\noindent $(231)2 \rightarrow 2132$ (using vii))

\noindent $2(312)\rightarrow 2132$ (using viii))
\item
\noindent $31(231)\rightarrow 31213 \rightarrow 13213 $ (using
vii), viii))

\noindent $(312)31\rightarrow 13231 \rightarrow 13213$ (using
viii), vii))
\item
\noindent $321(231) \rightarrow 321213 \rightarrow 232113
\rightarrow 231213 \rightarrow 213213$ (using vii), ix), vi),
vii))

\noindent $(3212)31\rightarrow 232131 \rightarrow 231321
\rightarrow 213321 \rightarrow 213231 \rightarrow 213213$ (using
ix), x), vii), i), vii))
\item
\noindent $321(312) \rightarrow 321132 \rightarrow 312132
\rightarrow 132132$ (using viii), vi), viii))

\noindent $(32131)2\rightarrow 313212 \rightarrow 312321
\rightarrow 132321 \rightarrow 132132$ (using x), ix), viii), xi))

\item
\noindent $32(3212)\rightarrow 322321 \rightarrow 232321
\rightarrow 232132 $ (using ix), ii), xi))

\noindent $ (32321)2\rightarrow 321322 \rightarrow 321232
\rightarrow 232132$ (using xi), ii), ix))

\item
\noindent
 $(32321)31  \rightarrow32132 31\rightarrow 321 321 3$
(using x), vii))

\noindent $32(32131)\rightarrow 32 31321\rightarrow 321 3 321
\rightarrow 321 3 231 \rightarrow 3213213$ (using xi), vii), i),
v))
\end{enumerate}
The result follows.
\end{proof}

Notice that the above result may be reformulated to say that $M$
admits a complete rewriting system, see \cite{otto}.

Let $v,u$ be elements of the free monoid $X$ with free generators
$\{1,\ldots ,n\}$. If $v=v_{m}\cdots v_{1},u=u_{q}\cdots u_{1}\in
X$, $v_{j},u_{i}\in \{1,\ldots ,n\}$, then we write $v\prec u$ if
$m\geq q$ and $v_{i} \leq u_{i}$ for $i=1,\ldots , q$. Recall from
\cite{las-lec} that every $w\in M$ can be uniquely written in the
form $w=w_{1}w_{2}\cdots w_{k}$ for some decreasing words
$w_{1},\ldots ,w_{k}$ such that $w_{i}\prec w_{i+1}$ for
$i=1,\ldots ,k-1$. This is called the standard tableaux form of
$w$. For example, $(421)^{2}(31)(32)2^{3}4$ is an element of
$M=M_{4}$ written in this form.

The following is an easy consequence of Theorem~\ref{n3}.

\begin{corollary}
The Gr\"obner-Shirshov basis found above leads to the following
normal forms of elements of the plactic monoid of rank $3$
$$(1)^{i} (21)^{j} (2)^{k} (321)^{l} (32)^{m} (3)^{q}$$
or
$$(1)^{i} (21)^{j} (31)^{k} (321)^{l} (32)^{m} (3)^{q}$$
for non-negative integers $i,j,k,l,m,q$.
\end{corollary}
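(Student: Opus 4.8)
The plan is to exploit the normal forms supplied by Theorem~\ref{n3}. Once that theorem is proved, the diamond lemma tells us that the \emph{irreducible} words of $X$ -- those containing, as a factor, none of the eleven leading monomials
$$332,\ 322,\ 331,\ 311,\ 221,\ 211,\ 231,\ 312,\ 3212,\ 32131,\ 32321$$
of i)--xi) -- are exactly the normal forms of the elements of $M$, one per element. Call these words \emph{admissible}. So the corollary amounts to the purely combinatorial assertion that the set of admissible words equals the set $N$ of all words of the two displayed shapes, and I would establish this by two inclusions.

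For $N\subseteq\{\text{admissible words}\}$, I would take a word of either shape and inspect all of its factors of length at most $5$; since such a factor can only be created inside one of the blocks $1,\,21,\,2,\,31,\,321,\,32,\,3$ or across the junction of two consecutive ones, a short check disposes of all eleven forbidden monomials. The points that need care are the ``near misses'': a $(321)\,(32)$-junction produces the factor $32132$, a $(31)\,(321)$-junction produces $31321$, and a $2\,(321)$-junction produces $2321$, but none of these equals the forbidden $32131$ or $3212$. It is also convenient to record that in a word of shape~A the pattern $31$ never occurs at all, while in one of shape~B every occurrence of $31$ is immediately followed by $3$; this makes the absence of $311,312,231,221,211,332,331,322$ transparent.

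The reverse inclusion -- that every admissible word lies in $N$ -- is the substantial part, and I would prove it by induction on the length, splitting on the first letter of an admissible word $w$. If $w=1w'$, then $w'$ is admissible, hence in $N$ by the inductive hypothesis, and prepending a $1$ only lengthens the initial block $1^{i}$, so $w\in N$. If $w=2w'$ or $w=3w'$, I would run through the finitely many possible shapes of the word $w'\in N$; admissibility of $w$ rules out most of them -- for instance prepending $2$ in front of a word beginning $21$ creates $221$, prepending $3$ in front of one beginning $21\,21$ creates $3212$, prepending $3$ in front of $2\,(321)^{d}\cdots$ with $d\ge 1$ creates $32321$, and so on -- and in each surviving case the prefixed letter merges cleanly into the block structure of $w'$, e.g. $3\cdot 21(321)^{l}(32)^{m}3^{q}=(321)^{l+1}(32)^{m}3^{q}$ and $2\cdot(321)^{d}(32)^{e}3^{f}$ is the shape-A word with $i=j=0$, $k=1$. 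Reading off the outcome in every case shows $w$ is again of shape~A or shape~B, completing the induction.

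The main obstacle is not conceptual but organizational: the case analysis in the last paragraph is long, and one has to be especially careful around the near-miss factors flagged above, where $32131$ is forbidden yet the closely related words $31321$ and $32132$ are admissible and in fact arise at natural block junctions. No separate argument is needed for uniqueness -- that each element of $M$ is represented by exactly one admissible word -- since that is already part of the Gr\"obner--Shirshov/diamond-lemma framework invoked at the outset.
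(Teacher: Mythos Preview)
Your outline is correct. Both you and the paper begin by checking that the two displayed shapes are irreducible; the difference is in the reverse inclusion. The paper argues structurally: given an irreducible word $w$, split it at the \emph{first} occurrence of $3$ as $w=u\cdot 3v$ with $u\in\langle 1,2\rangle$. Irreducibility over $\{1,2\}$ (only $221$ and $211$ are forbidden) forces $u=1^{i}(21)^{j}2^{k}$; a separate short observation shows that any irreducible word beginning with $3$ must have the form $(31)^{p}(321)^{l}(32)^{m}3^{q}$; and finally the forbidden factor $231$ at the junction forces $k=0$ or $p=0$, giving exactly the two shapes. Your induction on length with a first-letter case split reaches the same conclusion but is more laborious: the paper's decomposition isolates the $\{1,2\}$-prefix and the $3$-tail once and for all, whereas your method rebuilds the block structure one letter at a time and has to rule out each forbidden factor at each branch. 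On the other hand, your argument is completely mechanical and leaves nothing implicit, while the paper's ``it is easy to see that $3v$ is of the form $(31)^{p}(321)^{l}(32)^{m}3^{q}$'' still hides a small case analysis of its own. The paper also mentions a second proof you do not: one can instead check directly that the words listed, when brought to standard tableau form, exhaust all Young tableaux on $\{1,2,3\}$, so no combinatorics of forbidden factors is needed at all.
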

\begin{proof}
It is clear that words of the above two types cannot be reduced,
using the reductions described in Theorem~\ref{n3}. Hence, it
remains to show that every word can be reduced to one of the above
forms; or, that every minimal word $w\in X$ (a word that cannot be
reduced) is of one of the above forms. This can be easily seen
because if $w\in \langle 1,2\rangle $ then $w$ can be reduced to a
word of the form $1^{i} (21)^{j}2^{k}$. Otherwise, write $w=u3v$,
where $u,v\in X$. If $v\in 1M$ then $u\not \in M2$, so $u$ can be
reduced to a word of the form $1^{i} (21)^{j}$. Otherwise, $u$ can
be reduced to $1^{i} (21)^{j}2^{k}$. Next, it is easy to see that
$3v$ is of the form $(31)^{p}(321)^{l} (32)^{m} (3)^{q}$. Since
$231$ cannot be a subword of $w$, the assertion follows.

Another way of proving the claim is to show that the standard
tableaux forms of the elements listed in the statement (they all
must be different) exhaust all tableaux forms of elements of $M$.
This is an easy consequence of the algorithm that allows to bring
any word to a word in the tableau form, see \cite{las-lec}
\end{proof}

We conclude with the following surprising observation.

\begin{theorem}
If $n>3$ then every Gr\"{o}bner-Shirshov basis of $K[M]$ (associated
to the degree-lexicographic ordering of $M$) is infinite.
\end{theorem}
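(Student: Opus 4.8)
The plan is to exhibit, for each sufficiently large $d$, an element of the Gröbner–Shirshov basis whose leading word has degree at least $d$; this forces any Gröbner–Shirshov basis associated to the degree-lexicographic order to be infinite. Since $M_n$ for $n>3$ contains $M_4$ as a retract (the generators $x_1,\dots,x_4$ generate a copy of the plactic monoid of rank $4$, and killing $x_5,\dots,x_n$ is a monoid homomorphism), it suffices to treat $n=4$, after checking that the obstruction for $M_4$ persists in $M_n$. Concretely, I would first describe the reduction system coming from the defining relations (each Knuth relation, oriented so the degree-lexicographically larger side is the leading term, is homogeneous, so termination is automatic), and then argue that the set of irreducible words modulo this initial system is still ``too big'': two distinct irreducible words represent the same element of $M_4$, so new relations — hence new basis elements — must be added, and one shows this phenomenon recurs in unbounded degree.

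The heart of the argument is to produce an explicit infinite family of obstructions. The natural candidate comes from the standard tableaux form described just before the corollary: an element of $M_4$ is encoded by a Young tableau, and one looks for a family of tableaux whose tableau words, when written in the free monoid, have two ``competing'' straightenings that only get reconciled by a relation of growing length. A good concrete family to try is built from the columns $(4\,2\,1)$ and $(3)$ (or $(4\,3\,1)$ and $(2)$), iterated: words like $w_k = 4\,2\,1\,(3\,2\,1)^k\,\dots$ versus a rearrangement, where the ``crystal''/jeu-de-taquin move needed to bring one to normal form must sweep across all $k$ copies. One then checks that the leading word of the resulting element of the basis (equivalently, a minimal-degree relation not implied by shorter ones) has degree linear in $k$. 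I would verify this by computing normal forms (via the tableau algorithm of \cite{las-lec}) of a handful of small cases $k=1,2,3$ to confirm the pattern, and then give a uniform argument: any confluent rewriting system must contain, for the overlap producing $w_k$, a rule whose left side has length $\ge ck$ for some constant $c>0$, because no shorter rule can bridge the two irreducible forms that $w_k$ reduces to under all shorter rules.

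The main obstacle — and the step requiring real care — is the uniform lower bound on the length of the needed rule, i.e. proving that the obstruction at stage $k$ genuinely cannot be resolved using only basis elements of bounded degree. For this I would combinatorially characterize the irreducible words of each degree (as in the rank-$3$ corollary, but now for rank $4$ the irreducible words are governed by the tableau form and are more involved), show that within a suitably chosen infinite family the map to $M_4$ is not injective at every length, and pin down the precise pair of distinct irreducible words $u_k \neq v_k$ in $X$ with $\overline{u_k} = \overline{v_k}$ in $M$ and $|u_k|=|v_k|\to\infty$ such that the degree-lex leading word among the words representing $\overline{u_k}$ is not reducible by any relation of degree less than $|u_k|$. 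Once such a family is in hand, Newman's/Bergman's diamond lemma forces each $u_k$ (or the appropriate overlap word producing it) to contribute a basis element of unbounded degree, so the basis is infinite. I would also remark that this is exactly the point of departure from the Chinese algebra case of \cite{chen}, where the analogous overlaps close up in bounded degree.
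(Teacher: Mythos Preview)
Your high-level strategy matches the paper's: exhibit, for each $i$, a word $w_i\in X$ of length growing with $i$ that is not minimal in its plactic class but every proper factor of which \emph{is} minimal, forcing $w_i$ to be the leading monomial of a Gr\"obner--Shirshov basis element. The retract/content-preservation remark reducing to rank $4$ is fine (plactic relations preserve content, so a word in $\{1,2,3,4\}^*$ can only be rewritten to words in $\{1,2,3,4\}^*$), and the paper implicitly uses the same fact by working only with letters $1,2,3,4$.

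The genuine gap is that you never actually produce such a family, nor do you supply the mechanism that certifies minimality of proper factors; you only promise to ``try'' candidates like $4\,2\,1\,(3\,2\,1)^k\ldots$ and to ``check small cases.'' The paper's proof lives precisely in this step. Its family is $w_i = 3\,2\,3^{i}\,4\,3\,1$, and the nontrivial content is the verification that the prefix $a = 3\,2\,3^{i}\,4\,3$ and the suffix $b = 2\,3^{i}\,4\,3\,1$ are each already the deg-lex minimal representatives of their plactic classes. For $a$ this is short (no rewriting can move a $2$ to the front or a $4$ to the end). For $b$ the paper uses two concrete invariants you do not invoke: the length of the longest strictly decreasing subsequence (a plactic invariant), which rules out $b\in 1M$ for most candidate rewritings and rules out $b\in 21M$ entirely, together with an explicit comparison of standard tableau forms to eliminate the remaining case. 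Without a specific family and without these (or equivalent) invariants, your ``uniform lower bound'' paragraph is only a restatement of what must be shown, not an argument. In short: right architecture, but the load-bearing computation---the explicit $w_i$ and the minimality proofs for its maximal proper factors---is missing, and your suggested candidates are not yet pinned down enough to know whether they would carry it.
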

\begin{proof}
Consider the words $w_{i}=32 3^{i} 431 \in X$, for $i=1,2,\ldots$.
Since we have the following equalities in $M$:
$$323=332, \, 32431=34231=34213=32413=32143, \, 3213=3321$$
it follows that $32 3^{i} 431=w_{i}=3213^{i}43,$ holds also in
$M$.

Let $a=32 3^{i} 43, b=2 3^{i} 431\in X$. We claim that $a$ is the
minimal word in $X$ among all words that represent $32 3^{i} 43$
as an element of $M$. We also claim that  $b$ is the minimal word
in $X$ among all words that represent $2 3^{i} 431$ as an element
of $M$. Then it is clear that, in order to reduce the word $32
3^{i} 431$, we have to include the reduction $32 3^{i} 431
\rightarrow 3 213^{i}43$ to the set of allowed reductions. Since
$i\geq 1$ is arbitrary, the set of reductions must be infinite,
and the assertion follows.

In order to verify the claims, notice first that the defining
relations do not allow to rewrite $32 3^{i} 43$ in $M$ in the form
$2v$ form some $v\in X$, or in the form $u4$ for some $u\in X$. It
follows easily that $32 3^{i} 43$ is a minimal word in its class
in $M$.

Next, consider the word $b=2 3^{i} 431$. Suppose $b\in 1M$. The
only defining relations that can be used to bring $1$ to the first
position in a presentation of $b$ are $312=132, 412=142$ or
$413=143$. In the corresponding cases we would get
$b=1323^{j}43^{i-j}, b= 142 3^{i+1}, b=143 3^{j}23^{i-j}$, for
some $j\geq 0$, respectively. Since the maximal length of a
decreasing subsequence of the given word is an invariant of the
plactic class of this word, see \cite{las-lec}, this implies that
$b=143 3^{j}23^{i-j}$. However, the standard tableaux form of the
latter element is easily seen to be $(431)2 3^{i}$, while the
standard tableaux form of $2 3^{i} 431$ is $(421)3^{i+1}$,
\cite{las-lec}. This contradiction shows that $b\notin 1M$.
Suppose $b \in 21M$. Then $b=213^{j}43^{i-j+1}$ in $M$, for soome
$j\geq 0$, again a contradiction because the latter has no
decreasing subsequence of length three. So a minimal word that
represents $b$ in $M$ should start with $23$ and hence it must be
of the form $23^{i}431$ (because it should have a decreasing
subsequence of length three). The result follows.
\end{proof}

The above result is in contrast with the corresponding result for
the strongly related class of monoids, also defined by homogeneous
monoid presentations and with all defining relations of degree $3$
- the so called Chinese monoids. The latter class was introduced
and its combinatorial properties were studied in \cite{chinese}.
It was shown in \cite{chen} that the Gr\"{o}bner-Shirshov basis of the
Chinese algebra of any rank $n\geq 1$ is finite. Notice that the
Chinese algebra of rank $n$ has the same growth function (see
\cite{krause}) as the plactic algebra of rank $n$ and its elements
also admit a canonical form expressed in terms of certain
tableaux, \cite{duchamp}. Moreover, if $n<3$, then the two
algebras coincide.

\vspace{30pt}
 \noindent \begin{tabular}{llllllll}
 {\L}ukasz Kubat && Jan Okni\'nski\\
 Institute of Mathematics && Institute of Mathematics \\
 Polish Academy of Sciences && Warsaw University \\
 \'Sniadeckich 8 &&  Banacha 2\\
 00-956 Warsaw, Poland && 02-097 Warsaw, Poland \\
  kubat.lukasz@gmail.com && okninski@mimuw.edu.pl
\end{tabular}

\end{document}